\newtheorem{theorem}{Theorem}
\newtheorem{proposition}[theorem]{Proposition}
\theoremstyle{definition}
\newtheorem{definition}{Definition}
\newtheorem{example}{Example}
\newtheorem{remark}{Remark}
\newtheorem*{conjecture}{Conjecture}
\begin{document}
\title{On exponential stability for linear discrete-time systems in Banach spaces}
\author{Ioan-Lucian Popa*, Traian Ceau\c su*, Mihail Megan$^{*,\ddag}$}
\date{}
\maketitle
\abstract{In this paper we investigate four concepts of
exponential stability for difference equations in Banach spaces.
Characterizations of these concepts are given. They can be
considered as variants for the discrete-time case of the classical
results due to  E.A. Barbashin \cite{barbasin} and R. Datko
\cite{datko}. An illustrative example clarifies the relations
between these concepts.}

\vspace{1cm}
{\it Keywords:} linear discrete-time systems, uniform
exponential stability, (nonuniform) exponential stability,
strongly
exponential stability\\

\let\thefootnote\relax\footnote{$^\ddag$ Academy of Romanian Scientists, Independen\c tei 54,
050094 Bucharest, Romania\\
$^{*,\ddag}$ Department of Mathematics, Faculty of Mathematics and
Computer Science, West University of Timi\c soara, V.  P\^ arvan
Blv. No. 4, 300223-Timi\c soara, Romania}

\section{Introduction}

Let $X$ be a real or complex Banach space, $\mathcal{B}(X)$ be the
Banach algebra of all bounded linear operators from $X$ into
itself, and $X^{*}$ be the Banach space of all continuous linear
functionals on $X$ (the dual space of $X$).

The norms on $X$, in $\mathcal{B}(X)$ and occasionally in the dual
space $X^{*}$ are all denoted by $\parallel . \parallel.$ Let
$\Delta$ be the set of all pairs $(m,n)$ of positive integers
satisfying the inequality $m \geq n.$ We also denote by $T$ the
set of all triplets $(m,n,p)$ of positive integers with $(m,n)$
and $(n,p)\in\Delta .$

We consider the linear discrete-time system
\begin{equation*}\tag{$\mathbf{A}$}\label{A}
x_{n+1}=A{(n)}x_{n},
\end{equation*}
where $A:\mathbb{N}\rightarrow \mathcal{B}(X)$ is a given
$\mathcal{B}(X)-$valued sequence.

For $(m,n)\in\Delta$ we define
\begin{equation}\label{eqAmn}
A_{m}^{n}=\left\{\begin{array}{ll}
A{(m)}\cdot \ldots\cdot  A{(n+1)},\;\; \;\; m \geq n+1\\
\qquad\quad I\qquad\qquad\quad,\;\;\;\; m=n.
\end{array}\right.
\end{equation}
(where $I$ is the identity operator on $X$).

It is obvious that $A_{m}^{n}A_{n}^{p}=A_{m}^{p}\;\; \text{for
all}\;\; (m,n,p)\in T.$

In the theory of difference equations both in finite and infinite
dimensional spaces, the concepts of exponential stability play a
central role in the study of the asymptotical behaviors of
solutions of discrete-time systems. In this sense we recall the
classical monographs due to R.P. Agarwal \cite{agarwal}, S. Elaydi
\cite{Elaydi1}, M.I. Gil \cite{mil}, L. Lakshmikantham and D.
Trigiante \cite{lasmikantham}, where the stability properties of
discrete-time systems are studied.

In this paper we consider four concepts of exponential stability
for linear discrete-time systems: uniform exponential stability,
nonuniform exponential stability, strong exponential stability and
exponential stability.

Our main objective is to obtain appropriate versions of the
well-known stability theorems due (in the continuous case) to R.
Datko \cite{datko} and E.A. Barbashin \cite{barbasin}. An
illustrative example clarifies the implications between these
exponential stability concepts.

We remark that in comparison with the classical notion of uniform
exponential stability, the concepts of strong exponential
stability and exponential stability are much weaker behaviors.

A principal motivation for weakening the assumption of uniform
exponential behavior is that from the point of view of ergodic
theory, almost all variational equations in  finite dimensional
spaces have a nonuniform exponential behavior.
%
%
\section{ Uniform exponential stability}
In this section we consider the well-known concept of uniform
exponential stability of a linear discrete-time system (\ref{A})
given by:
\begin{definition}\label{defues}
The linear discrete-time system (\ref{A}) is said to be {\it
uniformly exponentially stable} (and denoted as u.e.s) if there
are some
 constants $N\geq 1$ and $\alpha>0$ such that
 \begin{equation}\label{eques}
 \parallel A_{m}^{p}x\parallel \leq N e^{-\alpha(m-n)}\parallel
 A_{n}^{p}x\parallel,\;\;\text{for all}\;\;(m,n,p,x)\in T\times X.
 \end{equation}
\end{definition}
\begin{remark}\label{remues}
The linear discrete-time system (\ref{A})  is uniformly
exponentially stable if and only if there are some  constants
$N\geq 1$ and $\alpha>0$ such that
 \begin{equation}\label{eqremues}
 \parallel A_{m}^{n}x\parallel \leq N e^{-\alpha(m-n)}\parallel
 x\parallel,\;\;\text{for all}\;\;(m,n,x)\in \Delta\times X.
 \end{equation}
\end{remark}
A preliminary result for uniform exponential stability is given
by:
\begin{proposition}\label{prop1}
For every linear discrete-time system (\ref{A}) the following
statements are equivalent:
\begin{description}

\item[{\it{i)}}] (\ref{A}) is uniformly exponentially stable;

\item[{\it{ii)}}] there exist two constants $N \geq 1$ and $a\in
(0,1)$ such that
\begin{equation}
 \parallel A_{m}^{n}x\parallel \leq N a^{m-n}\parallel
 A_{n}^{p}x\parallel,\;\;\text{for all}\;\;(m,n,p,x)\in T\times X;
 \end{equation}

\item[{\it{iii)}}] there exist a constant $N \geq 1$ and a
sequence of positive  real numbers $(a_n)$ with $a_{n}\rightarrow
0$ such that
\begin{equation}\label{p1eqiii}
 \parallel A_{m}^{n}x\parallel \leq N a_{m-n}\parallel A_{n}^{p}x\parallel,\;\;\text{for all}\;\;(m,n,p,x)\in T\times X;
 \end{equation}

\item[{\it{iv)}}] there exist a constant $N \geq 1$ and a
 sequence of positive real numbers $(a_{n})$ with
 $a_{n}\rightarrow 0$ such that
 \begin{equation}
\parallel A_{m}^{n}x\parallel \leq N a_{m-n}\parallel x\parallel,\;\;\text{for all}\;\;(m,n,x)\in \Delta\times X.
 \end{equation}
\end{description}
\end{proposition}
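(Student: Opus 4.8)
The plan is to prove the cyclic chain of implications $i)\Rightarrow ii)\Rightarrow iii)\Rightarrow iv)\Rightarrow i)$, using throughout the cocycle identity $A_m^nA_n^p=A_m^p$ for $(m,n,p)\in T$ together with Remark~\ref{remues}. The first three implications are immediate: for $i)\Rightarrow ii)$ one takes $a:=e^{-\alpha}\in(0,1)$ in \eqref{eques}; for $ii)\Rightarrow iii)$ it is enough to set $a_n:=a^n$, which tends to $0$; and for $iii)\Rightarrow iv)$ one specialises \eqref{p1eqiii} to $p=n$, so that $A_n^p=I$ and the right-hand side reduces to $Na_{m-n}\parallel x\parallel$.

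The only substantial step is $iv)\Rightarrow i)$, and for it I would use the discrete counterpart of the classical ``uniform boundedness on a block plus decay along an arithmetic progression implies exponential decay'' argument. Since $a_n\to 0$, first fix an integer $h\geq 1$ with $c:=Na_h<1$. Writing $iv)$ with $m$ replaced by $k+h$ gives $\parallel A_{k+h}^{k}y\parallel\leq c\parallel y\parallel$ for every $k\in\mathbb{N}$ and $y\in X$, and iterating this estimate along the cocycle yields $\parallel A_{n+jh}^{n}y\parallel\leq c^{\,j}\parallel y\parallel$ for all $j\geq 0$. For an arbitrary $(m,n)\in\Delta$ I would then write $m-n=jh+r$ with $0\leq r\leq h-1$, factor $A_m^{n}=A_m^{\,n+jh}A_{n+jh}^{n}$, and bound the short factor once more by $iv)$, namely $\parallel A_m^{\,n+jh}z\parallel\leq Na_r\parallel z\parallel\leq M\parallel z\parallel$ with $M:=N\max\{a_0,\dots,a_{h-1}\}$. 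Combining the two bounds gives $\parallel A_m^{n}y\parallel\leq Mc^{\,j}\parallel y\parallel\leq \widetilde N\,\widetilde a^{\,m-n}\parallel y\parallel$, where $\widetilde a:=c^{1/h}\in(0,1)$ and $\widetilde N:=\max\{1,\,M/c\}\geq 1$; this is precisely the estimate \eqref{eqremues}, so Remark~\ref{remues} gives that (\ref{A}) is uniformly exponentially stable, i.e. $i)$.

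I expect the bookkeeping in this last implication to be the only genuine obstacle: one must make sure that $h$ (hence $c$ and $\widetilde a$) can be chosen independently of $k$ — which is exactly what the hypothesis $a_n\to 0$ provides — and that the leftover block of length $r<h$ is dominated by the single constant $M$, uniform in $m$ and $n$; the passage from the geometric factor $c^{\,j}$ to the rate $\widetilde a^{\,m-n}$ then costs only the harmless factor $c^{-r/h}\leq c^{-1}$, which is absorbed into $\widetilde N$. The remaining three implications require no estimates at all.
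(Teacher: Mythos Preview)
Your proof is correct and follows essentially the same route as the paper: the first three implications are declared obvious, and for $iv)\Rightarrow i)$ both you and the authors fix an index $h$ (their $k$) with $Na_h<1$, write $m-n=jh+r$ by Euclidean division, iterate the one-step contraction along blocks of length $h$, and absorb the leftover block of length $r<h$ into a uniform constant. Your bookkeeping is in fact a bit more explicit than the paper's (you name $M=N\max\{a_0,\dots,a_{h-1}\}$ and track the factor $c^{-r/h}$), but the argument is the same.
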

\begin{proof}
The implications $(i)\Rightarrow (ii)\Rightarrow (iii) \Rightarrow
(iv)$ are obvious.

$(iv)\Rightarrow (i)$  We observe that from $a_{n}\rightarrow 0$
it follows that there  exists a positive integer $k$ with $Na_k
<1.$ Then for all $(m,n)\in\Delta$ there exist two positive
integers $r$ and $s$ with $0\leq r <k$ such that $m-n=ks+r.$ If we
define $\alpha = -\dfrac{\ln (Na_k)}{k}
>0$ then:

$(a)$ For the case $s=0$ we obtain
\begin{align*}
\parallel A_{m}^{n}x\parallel &\leq N a_{r} \parallel
x\parallel\leq N e^{\alpha r}e^{-\alpha r}\parallel x\parallel\\
&\leq N e^{-\ln (N a_{k})} e ^{-\alpha (m-n)}\parallel x\parallel
= M e^{-\alpha (m-n)}\parallel x\parallel
\end{align*}
for all $(m,n,x)\in\Delta\times X.$

$(b)$ If $s \neq 0$ we have
\begin{align*}
\parallel A_{m}^{n}x\parallel &\leq N a_{r} \parallel A_{n+sk}^{n}x\parallel \leq \ldots \leq N (N a_{k})^{s} \parallel x\parallel\leq\\
&\leq N e^{\alpha k} e^{-\alpha (m-n)}\parallel x\parallel
=Me^{-\alpha (m-n)} \parallel x\parallel.
\end{align*}
for all $(m,n,x)\in\Delta\times X.$ Thus, (\ref{A}) is u.e.s.,
which ends the proof.
\end{proof}
Other characterizations of the uniform exponential stability
property are given by:
\begin{theorem}\label{theorem1}
For every linear discrete-time system (\ref{A}) the following
assertions are equivalent:
\begin{description}

\item[{\it{i)}}] (\ref{A}) is uniformly exponentially stable;

\item[{\it{ii)}}] there are some constants $D \geq 1$ and $d >0$
such that
\begin{equation}
\sum\limits_{m=n}^{\infty} e^{d(m-n)}\parallel A_{m}^{p}x\parallel
\leq D \parallel A_{n}^{p}x\parallel,\;\;\text{for
all}\;\;(n,p,x)\in \Delta\times X.
\end{equation}

\item[{\it{iii)}}] there exists a constant $D \geq 1$ such that
\begin{equation}\label{eqd2iii}
\sum\limits_{m=n}^{\infty}\parallel A_{m}^{p} x\parallel \leq
D\parallel A_{n}^{p}x\parallel,\;\;\text{for all}\;\;(n,p,x)\in
\Delta\times X.
\end{equation}

\item[{\it{iv)}}] there exists a constant $D \geq 1$ such that
\begin{equation}
\sum\limits_{m=n}^{\infty}\parallel A_{m}^{n} x\parallel \leq
D\parallel x\parallel,\;\;\text{for all}\;\;(n,x)\in
\mathbb{N}\times X.
\end{equation}
\end{description}
\end{theorem}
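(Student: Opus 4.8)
The plan is to establish the cycle of implications $(i) \Rightarrow (ii) \Rightarrow (iii) \Rightarrow (iv) \Rightarrow (i)$, so that only the last step requires genuine work. For $(i) \Rightarrow (ii)$ I would start from the equivalent formulation in Remark~\ref{remues}: since $A_m^p x = A_m^n(A_n^p x)$ whenever $(m,n,p) \in T$, uniform exponential stability gives $\|A_m^p x\| \leq N e^{-\alpha(m-n)}\|A_n^p x\|$. Fixing any $d$ with $0 < d < \alpha$ (for instance $d = \alpha/2$), multiply by $e^{d(m-n)}$ and sum the resulting geometric series in $m$; this yields
\begin{equation*}
\sum_{m=n}^{\infty} e^{d(m-n)}\|A_m^p x\| \leq \frac{N}{1 - e^{-(\alpha - d)}}\,\|A_n^p x\|,
\end{equation*}
so $(ii)$ holds with $D = N/(1 - e^{-(\alpha-d)})$. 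The implication $(ii) \Rightarrow (iii)$ is immediate because $e^{d(m-n)} \geq 1$ for $d > 0$ and $m \geq n$, and $(iii) \Rightarrow (iv)$ follows by taking $p = n$ and using $A_n^n = I$.

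The substantive step is $(iv) \Rightarrow (i)$, and it is the expected obstacle: one must pass from a plain summability estimate to genuine exponential decay. First, from $(iv)$ every individual term of the series is bounded by $D\|x\|$, so $\|A_m^n\| \leq D$ for all $(m,n) \in \Delta$. Next comes the averaging argument: for $n \leq j \leq m$ write $A_m^n x = A_m^j (A_j^n x)$, hence $\|A_m^n x\| \leq \|A_m^j\|\,\|A_j^n x\| \leq D\|A_j^n x\|$; summing this over $j$ from $n$ to $m$ gives
\begin{equation*}
(m - n + 1)\,\|A_m^n x\| \leq D\sum_{j=n}^{m}\|A_j^n x\| \leq D\sum_{j=n}^{\infty}\|A_j^n x\| \leq D^2\|x\|,
\end{equation*}
so that $\|A_m^n x\| \leq \dfrac{D^2}{m-n+1}\,\|x\|$ for all $(m,n,x) \in \Delta \times X$.

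Finally I would invoke Proposition~\ref{prop1}: the last inequality is precisely statement $(iv)$ of that proposition with constant $N = D^2 \geq 1$ and the sequence $a_k = 1/(k+1)$, which is positive and tends to $0$. Hence $(\ref{A})$ is uniformly exponentially stable, closing the cycle. The only delicate points to watch are the convergence bookkeeping in the geometric sum for $(i) \Rightarrow (ii)$ and applying the averaging inequality over the correct index range $n \le j \le m$ in $(iv) \Rightarrow (i)$; everything else is routine.
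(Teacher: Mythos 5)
Your proposal is correct, and it is actually more complete than what the paper provides: for the one nontrivial implication $(iv)\Rightarrow(i)$ the authors simply cite Przyluski--Rolewicz, whereas you supply a full self-contained argument. Your chain $(i)\Rightarrow(ii)\Rightarrow(iii)\Rightarrow(iv)$ matches the paper's (the authors call these steps ``immediate''), and each of your justifications checks out, including the geometric-series bound $D=N/(1-e^{-(\alpha-d)})>N\geq 1$. For $(iv)\Rightarrow(i)$ your averaging device --- first extracting the uniform bound $\|A_m^n\|\leq D$ from the summability hypothesis, then writing $A_m^n x=A_m^j(A_j^n x)$ and summing over $n\leq j\leq m$ to get $\|A_m^n x\|\leq D^2\|x\|/(m-n+1)$, and finally feeding the sequence $a_k=1/(k+1)$ into Proposition~\ref{prop1}$(iv)$ --- is exactly the classical Datko-type argument, and it is the same trick the paper itself deploys in the proof of Theorem~\ref{theorem2}, $(iii)\Rightarrow(i)$, in the dual setting. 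So your route is the ``expected'' one; the only thing it buys beyond the paper's version is that the reader need not consult the reference, since Proposition~\ref{prop1} already does the work of converting the $O(1/(m-n))$ decay into genuine exponential decay. No gaps.
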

\begin{proof}

The implications $(i)\Rightarrow (ii)\Rightarrow (iii)\Rightarrow
(iv)$ are immediate.

The implication $(iv)\Rightarrow (i)$ has been proved in
\cite{przyluski}.
\end{proof}
\begin{remark}
The preceding theorem can be considered as a discrete-time variant
of the well-known theorem due to Datko \cite{datko}. Similar
results are obtained in \cite{przyluski}, \cite{predamegan},
\cite{agarwal}.
\end{remark}

\begin{theorem}\label{theorem2}
The following statements are equivalent:
\begin{description}

\item[{\it{i)}}]  (\ref{A}) is uniformly exponentially stable;

\item[{\it{ii)}}] there are some constants $B \geq 1$ and $b >0$
such that
\begin{equation}
\sum\limits_{k=n}^{m} e^{b(m-k)}\parallel
(A_{m}^{k})^{*}x^{*}\parallel \leq B\parallel
x^{*}\parallel,\;\;\text{for all}\;\;(m,x^{*})\in \mathbb{N}\times
X^{*};
\end{equation}

\item[{\it{iii)}}] there exists a constant $B \geq 1$ such that
\begin{equation}\label{eqd23}
\sum\limits_{k=0}^{m}\parallel (A_{m}^{k})^{*}x^{*}\parallel \leq
B\parallel x^{*}\parallel,\;\;\text{for all}\;\;(m,x^{*})\in
\mathbb{N}\times X^{*};
\end{equation}
\end{description}
\end{theorem}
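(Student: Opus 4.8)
The plan is to run the cycle of implications $(i)\Rightarrow(ii)\Rightarrow(iii)\Rightarrow(i)$, working throughout in the dual space $X^{*}$ and using repeatedly the elementary fact that $\|(A_{m}^{n})^{*}\|=\|A_{m}^{n}\|$ for every bounded linear operator, together with the adjoint form of the cocycle identity: from $A_{m}^{k}A_{k}^{n}=A_{m}^{n}$ one gets $(A_{m}^{n})^{*}=(A_{k}^{n})^{*}(A_{m}^{k})^{*}$ for all $(m,k,n)\in T$. The first two implications are the easy ones. For $(i)\Rightarrow(ii)$ I would invoke Remark~\ref{remues} to get $N\ge 1,\ \alpha>0$ with $\|A_{m}^{k}\|\le Ne^{-\alpha(m-k)}$, so that $\|(A_{m}^{k})^{*}x^{*}\|\le Ne^{-\alpha(m-k)}\|x^{*}\|$; choosing $b=\alpha/2$ makes the weighted sum geometric,
\[
\sum_{k=0}^{m}e^{b(m-k)}\|(A_{m}^{k})^{*}x^{*}\|\le N\|x^{*}\|\sum_{j=0}^{\infty}e^{-\alpha j/2}=\frac{N}{1-e^{-\alpha/2}}\,\|x^{*}\|,
\]
which is $(ii)$ with $B=N/(1-e^{-\alpha/2})\ge 1$. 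The implication $(ii)\Rightarrow(iii)$ is immediate since $e^{b(m-k)}\ge 1$ whenever $k\le m$.

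The substantial step is $(iii)\Rightarrow(i)$. First, each individual summand in $(iii)$ is bounded by the whole sum, so $\|(A_{m}^{k})^{*}x^{*}\|\le B\|x^{*}\|$, hence $\|A_{m}^{k}\|=\|(A_{m}^{k})^{*}\|\le B$ for all $(m,k)\in\Delta$: the system is uniformly bounded. Now fix $(m,n)\in\Delta$ and $x^{*}\in X^{*}$. Using the adjoint cocycle identity and the uniform bound,
\[
\|(A_{m}^{n})^{*}x^{*}\|=\|(A_{k}^{n})^{*}(A_{m}^{k})^{*}x^{*}\|\le\|(A_{k}^{n})^{*}\|\,\|(A_{m}^{k})^{*}x^{*}\|\le B\,\|(A_{m}^{k})^{*}x^{*}\|
\]
for every $k$ with $n\le k\le m$. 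Summing this inequality over $k=n,\dots,m$ and then enlarging the range and applying $(iii)$ gives
\[
(m-n+1)\,\|(A_{m}^{n})^{*}x^{*}\|\le B\sum_{k=n}^{m}\|(A_{m}^{k})^{*}x^{*}\|\le B\sum_{k=0}^{m}\|(A_{m}^{k})^{*}x^{*}\|\le B^{2}\|x^{*}\|,
\]
so that $\|A_{m}^{n}\|=\|(A_{m}^{n})^{*}\|\le B^{2}/(m-n+1)$ for all $(m,n)\in\Delta$. Consequently $\|A_{m}^{n}x\|\le N a_{m-n}\|x\|$ with $N=B^{2}\ge 1$ and $a_{j}=1/(j+1)\to 0$, which is precisely statement $(iv)$ of Proposition~\ref{prop1}; that proposition then delivers uniform exponential stability and closes the cycle.

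The main obstacle I expect is the averaging argument in $(iii)\Rightarrow(i)$: the point is to notice that uniform boundedness of the adjoints lets one replace the single fixed quantity $\|(A_{m}^{n})^{*}x^{*}\|$ by each of the $m-n+1$ terms $\|(A_{m}^{k})^{*}x^{*}\|$ with $k\ge n$, thereby converting the finite sum in $(iii)$ into the factor $m-n+1$ and producing polynomial — and hence, via Proposition~\ref{prop1}, exponential — decay. Transferring the final estimate from $X^{*}$ back to $X$ through $\|A_{m}^{n}\|=\|(A_{m}^{n})^{*}\|$ is routine but indispensable, and one should also keep an eye on the lower summation index in $(ii)$ so that it is consistent with the quantifier and with $(iii)$.
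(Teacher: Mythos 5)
Your proposal is correct and follows essentially the same route as the paper's own proof: the geometric-series estimate for $(i)\Rightarrow(ii)$, and for $(iii)\Rightarrow(i)$ the same averaging argument via $\|(A_{m}^{n})^{*}x^{*}\|\le B\|(A_{m}^{k})^{*}x^{*}\|$ summed over $k=n,\dots,m$ to get $(m-n+1)\|(A_{m}^{n})^{*}x^{*}\|\le B^{2}\|x^{*}\|$, followed by Proposition~\ref{prop1}(iv). Your write-up is in fact slightly more explicit than the paper's in citing Proposition~\ref{prop1}(iv) for the final passage from $1/(m-n+1)$ decay to exponential decay, and in flagging the inconsistency of the lower summation index in statement $(ii)$.
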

\begin{proof}

$(i)\Rightarrow (ii)$ Using Definition \ref{defues} we have that
$$\sum\limits_{k=0}^{m} e^{b(m-k)}\parallel (A_{m}^{k})^{*}x^{*}\parallel \leq \sum\limits_{k=0}^{m}e^{b(m-k)} N e^{-\alpha(m-k)}\parallel x^{*}\parallel \leq \frac{N
e^{\alpha}}{e^{\alpha}-e^{b}}\parallel x^{*}\parallel$$ for any
$b\in (0,\alpha)$ and all $(m,x^{*})\in \mathbb{N}\times\ X^{*}.$
\newline
$(ii)\Rightarrow (iii)$ This is obvious.
\newline
$(iii)\Rightarrow (i)$ Let $(m,n,x^{*})\in\Delta\times X^{*}.$ By
our hypothesis for $k\in \{n,n+1,\ldots , m\}$ and all $y^{*}\in
X^{*}$  we have that $\parallel (A_{k}^{n})^{*}y^{*}\parallel \leq
B
\parallel y^{*}\parallel.$ If we consider
$y^{*}=(A_{m}^{k})^{*}x^{*}$ we obtain $\parallel
(A_{m}^{n})^{*}x^{*}\parallel \leq \parallel
(A_{k}^{n})^{*}(A_{m}^{k})^{*}x^{*}\parallel\leq B \parallel
(A_{m}^{k})^{*}x^{*}\parallel.$
\begin{align*}
\text{Hence,}\;\;(m-n+1)\parallel (A_{m}^{n})^{*}x^{*}\parallel &=\sum\limits_{k=n}^{m}\parallel (A_{m}^{n})^{*}x^{*}\parallel\leq B\sum\limits_{k=n}^{m}\parallel (A_{m}^{k})^{*}x^{*}\parallel\\
& \leq B\sum\limits_{k=0}^{m}\parallel
(A_{m}^{k})^{*}x^{*}\parallel\leq  B^{2}\parallel x^{*}\parallel.
\end{align*}

Thus,  we can conclude that equation (\ref{A}) is u.e.s.
\end{proof}
\begin{remark}
The previous theorem  is an analog for the linear discrete-time
systems of a well-known theorem proved in the continuous case by
E.A. Barbashin (\cite{barbasin}, Theorem 5.1, p. 169).
\end{remark}
%
%
\section{Nonuniform exponential stability}
%
\begin{definition}
The linear discrete-time system (\ref{A}) is said to be {\it
nonuniformly exponentially stable} (and denoted as n.e.s.) if
there exists a constant $\alpha >0$ and a nondecreasing function
$N:\mathbb{R}_{+}\rightarrow [1,\infty)$ such that
\begin{equation}\label{defnes}
\parallel A_{m}^{p}x\parallel \leq N(n) e^{-\alpha
(m-n)}\parallel A_{n}^{p}x\parallel,\;\;\text{for
all}\;\;(m,n,p,x)\in T\times X.
\end{equation}
\end{definition}
\begin{remark}\label{remarcanes}
The linear discrete-time system (\ref{A}) is n.e.s. if and only if
there exist a constant $\alpha >0$ and a nondecreasing function
$N:\mathbb{R}_{+}\rightarrow [1,\infty)$ such that
\begin{equation}\label{remnes}
\parallel A_{m}^{n}x\parallel \leq N(n) e^{-\alpha
(m-n)}\parallel x\parallel,\;\;\text{for all}\;\;(m,n,x)\in
\Delta\times X.
\end{equation}
\end{remark}
It is obvious that if (\ref{A})  is u.e.s. then it is n.e.s. The
following example shows that the converse implication is not
valid.
\begin{example}\label{exemplul1}
Let (\ref{A}) be the linear discrete-time system given by
\begin{equation}\label{a_n}
 A(n)=c
a_{n}I,\;\;\text{where}\;\; a_{n} = \left\{
  \begin{array}{l l}
    e^{-n} & \quad \text{if $n=2k$}\\
    e^{n+1} & \quad \text{if $n=2k+1$}\\
  \end{array} \right.
\end{equation}
with $c\in\left(0,\dfrac{1}{e}\right).$ Let
$(m,n,x)\in\Delta\times X.$ According to (\ref{eqAmn}) we have
that
\[
  A_{m}^{n}x = \left\{
  \begin{array}{l l}
    c^{m-n}a_{mn}x & \quad m>n\\
    x & \quad m=n\\
  \end{array} \right. ,
\]
 where
\[
  a_{mn} = \left\{
  \begin{array}{l l}
    1 & \quad \text{if $m=2q$ and $n=2p$}\\
    e^{-n-1} & \quad \text{if $m=2q$ and $n=2p+1$}\\
    e^{m+1} & \quad \text{if $m=2q+1$ and $n=2p$}\\
    e^{m-n} & \quad \text{if $m=2q+1$ and $n=2p+1$}\\
  \end{array} \right.
\]
Firstly, we observe that if we suppose that equation (\ref{A}) is
u.e.s. then there exist some constants $N \geq 1$ and $\alpha >0$
such that
$$\parallel A_{m}^{n}x\parallel \leq N e^{-\alpha (m-n)} \parallel
x\parallel,\;\;\text{for all}\;\;(m,n,x)\in\Delta\times X.$$

In particular, for $m=2q+1$ and $n=2q$ it results that $e^{\alpha
+2q+2}\leq N,$ for all positive integers $q$, which is a
contradiction. Hence, (\ref{A}) is not u.e.s. for every $c>0.$

If $c\in\left(0,\dfrac{1}{e}\right)$ then for $\alpha=\ln
\dfrac{1}{ce} >0$ and $N(n)=e^{n+1}$ we have that
\begin{equation}\label{steluta}
\parallel
A_{m}^{n}x\parallel \leq N(n)e^{-\alpha (m-n)} \parallel
x\parallel,\;\;\text{for all}\;\;(m,n,x)\in\Delta\times X,
\end{equation}
which shows that (\ref{A}) is n.e.s.
\end{example}
\begin{theorem}\label{theorem1nes}
The linear discrete-time system (\ref{A}) is nonuniformly
exponentially stable if and only if  there exists a  constant $d
>0$ and a nondecreasing function $N:\mathbb{R}_{+}\rightarrow
[1,\infty)$ such that
\begin{equation}\label{eq1th1nes}
\sum\limits_{m=n}^{\infty} e^{d(m-n)}\parallel A_{m}^{p}x\parallel
\leq  N(n)\parallel A_{n}^{p}x\parallel,\;\;\text{for
all}\;\;(m,n,p,x)\in T\times X.
\end{equation}
\end{theorem}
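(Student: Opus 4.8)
The plan is to prove the two implications separately, following the pattern of Theorem~\ref{theorem1} but keeping track of the dependence on $n$ in the constants.

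For necessity, I would start from the defining estimate of nonuniform exponential stability, \eqref{defnes}: there are $\alpha>0$ and a nondecreasing function $N_{1}:\mathbb{R}_{+}\to[1,\infty)$ with $\|A_{m}^{p}x\|\le N_{1}(n)e^{-\alpha(m-n)}\|A_{n}^{p}x\|$ for all $(m,n,p,x)\in T\times X$. Fix any $d\in(0,\alpha)$. Inserting this estimate term by term into the series in \eqref{eq1th1nes}, the factor $\|A_{n}^{p}x\|$ comes out and what remains is the convergent geometric series $\sum_{m\ge n}e^{-(\alpha-d)(m-n)}=\bigl(1-e^{-(\alpha-d)}\bigr)^{-1}$. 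This yields \eqref{eq1th1nes} with $N(n):=N_{1}(n)\bigl(1-e^{-(\alpha-d)}\bigr)^{-1}$, which is again nondecreasing and takes values in $[1,\infty)$ since $N_{1}(n)\ge1$ and the geometric factor exceeds $1$.

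For sufficiency, I would use only that the terms of the series in \eqref{eq1th1nes} are nonnegative. Fix $(m,n,p,x)\in T\times X$; since $m\ge n$, the quantity $e^{d(m-n)}\|A_{m}^{p}x\|$ is one of the summands of that series, hence it is bounded by the whole sum: $e^{d(m-n)}\|A_{m}^{p}x\|\le N(n)\|A_{n}^{p}x\|$, i.e.\ $\|A_{m}^{p}x\|\le N(n)e^{-d(m-n)}\|A_{n}^{p}x\|$. By \eqref{defnes} this says that (\ref{A}) is n.e.s.\ with exponent $\alpha=d$. I do not anticipate a genuine obstacle: the key observation is that, unlike the unweighted Datko-type condition in part (iv) of Theorem~\ref{theorem1} --- whose converse required the separate argument of \cite{przyluski} --- the exponential weight $e^{d(m-n)}$ already sits inside the sum, which makes the sufficiency direction immediate. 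The only points needing care are the choice $d<\alpha$ (so that the geometric series converges) and the verification that the new function $N$ remains nondecreasing with values in $[1,\infty)$.
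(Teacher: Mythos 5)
Your proposal is correct and follows essentially the same route as the paper: the necessity is the same geometric-series computation the paper dismisses as ``a simple verification,'' and the sufficiency is exactly the paper's argument of bounding a single nonnegative summand by the whole sum to recover the defining estimate with exponent $d$ (the paper does this with $p=n$ and invokes Remark~\ref{remarcanes}, while you keep general $p$, an immaterial difference). No gaps.
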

\begin{proof}
{\it Necessity.} It is a simple verification.
\newline
{\it Sufficiency.} The inequality (\ref{eq1th1nes}) implies that
$e^{d(m-n)}\parallel A_{m}^{n}x\parallel\leq N(n)\parallel
x\parallel,$ for all $(m,n,x)\in\Delta\times X.$ By Remark
\ref{remarcanes} it results that (\ref{A}) is n.e.s.
\end{proof}
\begin{theorem}\label{theorem2nes}
 If there is a constant $b >0$ and a nondecreasing function
$N:\mathbb{R}_{+}\rightarrow [1,\infty)$ such that
\begin{equation}\label{eq1th2nes}
\sum\limits_{k=n}^{m} e^{b(m-k)}\parallel
(A_{m}^{k})^{*}x^{*}\parallel \leq N(n)\parallel
x^{*}\parallel,\;\;\text{for all}\;\;(m,n,x^{*})\in \Delta\times
X^{*},
\end{equation}
then the linear discrete-time system (\ref{A}) is nonuniformly
exponentially stable.
\end{theorem}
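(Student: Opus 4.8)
The plan is to extract the required nonuniform decay estimate for $\|A_m^n\|$ directly from a single term of the hypothesis (\ref{eq1th2nes}), transfer it from the dual space $X^*$ back to $X$ by duality, and then conclude via Remark \ref{remarcanes}.

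First I would fix $(m,n,x^*)\in\Delta\times X^*$ and observe that every summand in (\ref{eq1th2nes}) is nonnegative, so keeping only the index $k=n$ yields $e^{b(m-n)}\|(A_m^n)^*x^*\|\le N(n)\|x^*\|$, that is, $\|(A_m^n)^*x^*\|\le N(n)e^{-b(m-n)}\|x^*\|$ for every $x^*\in X^*$. Next I would pass back to $X$: for any $x\in X$ the Hahn--Banach theorem gives $\|A_m^n x\|=\sup_{\|x^*\|\le1}\bigl|\bigl((A_m^n)^*x^*\bigr)(x)\bigr|\le\bigl(\sup_{\|x^*\|\le1}\|(A_m^n)^*x^*\|\bigr)\|x\|\le N(n)e^{-b(m-n)}\|x\|$, valid for all $(m,n,x)\in\Delta\times X$. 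Since $N$ is nondecreasing with values in $[1,\infty)$ and $b>0$, Remark \ref{remarcanes} (with $\alpha=b$) then tells us that (\ref{A}) is nonuniformly exponentially stable, completing the argument.

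I do not anticipate a genuine obstacle: the only non-bookkeeping ingredient is the isometric identification of an operator with its adjoint (equivalently, the Hahn--Banach formula $\|Tx\|=\sup_{\|x^*\|\le1}|x^*(Tx)|$), and everything else is selecting the correct term of the sum and quoting Remark \ref{remarcanes}. If one prefers a proof parallel to the implication $(iii)\Rightarrow(i)$ of Theorem \ref{theorem2}, one can instead apply (\ref{eq1th2nes}) to the pair $(k,n)$ with $n\le k\le m$ to get $\|(A_k^n)^*y^*\|\le N(n)\|y^*\|$, substitute $y^*=(A_m^k)^*x^*$ and use the factorization $(A_m^n)^*=(A_k^n)^*(A_m^k)^*$ to obtain $\|(A_m^n)^*x^*\|\le N(n)\|(A_m^k)^*x^*\|$, then sum this against the weights $e^{b(m-k)}$ and use $\sum_{k=n}^m e^{b(m-k)}\ge e^{b(m-n)}$; this reaches the same conclusion, though only with the weaker constant $N(n)^2$.
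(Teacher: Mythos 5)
Your proposal is correct and is essentially the paper's own argument: the paper likewise isolates the $k=n$ term of the sum to get $\|(A_m^n)^*\|\le N(n)e^{-b(m-n)}$ and then uses $\|A_m^n\|=\|(A_m^n)^*\|$ together with Remark \ref{remarcanes}. You merely make the duality step (Hahn--Banach) explicit, which the paper leaves implicit.
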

\begin{proof}
By (\ref{eq1th2nes}) we have that $\parallel A_{m}^{n}\parallel
\leq N(n) e^{-b(m-n)},$ for all $(m,n)\in\Delta,$ which implies
that (\ref{A}) is n.e.s.
\end{proof}
%
%
%
\section{Exponential Stability}
In this section we study a particular concept of nonuniform
exponential stability, studied by L. Barreira and C. Valls (see
for example \cite{bareira},\cite{bareira1}).
\begin{definition}\label{defes}
The linear discrete-time system (\ref{A}) is said to be {\it
exponentially stable} (and denoted as e.s) if there are some
 constants $N\geq 1,$ $\alpha>0$ and $\beta \geq 0$ such that
 \begin{equation}\label{es}
 \parallel A_{m}^{p}x\parallel \leq N e^{-\alpha(m-n)}e^{\beta n}\parallel
 A_{n}^{p}x\parallel,\;\;\text{for all}\;\;(m,n,p,x)\in T\times X.
 \end{equation}
\end{definition}
\begin{remark}\label{remark4}
The linear discrete-time system (\ref{A}) is  exponentially stable
if and only if there are some
 constants $N\geq 1,$ $\alpha>0$ and $\beta \geq 0$ such that
 \begin{equation}\label{rem4ues}
 \parallel A_{m}^{n}x\parallel \leq N e^{-\alpha(m-n)}e^{\beta n}\parallel
 x\parallel,\;\;\text{for all}\;\;(m,n,x)\in \Delta\times X.
 \end{equation}
\end{remark}
\begin{proposition}\label{prop2}
The following statements are equivalent:
\begin{description}

\item[{\it{i)}}] (\ref{A}) is exponentially stable;

\item[{\it{ii)}}] There exist some constants $N \geq 1,$ $\nu >0$
and $\beta \in [0,\nu)$ such that
\begin{equation}\label{eq2prop2}
 \parallel A_{m}^{p}x\parallel \leq N e^{-\nu (m-n)}e^{\beta m}\parallel
 A_{n}^{p}x\parallel,\;\;\text{for all}\;\;(m,n,p,x)\in T\times X.
 \end{equation}

 \item[{\it{iii)}}]There exist some constants $N
\geq 1,$ $\nu >0$ and $\delta >0$ with $\delta \leq \nu$  such
that
\begin{equation}\label{prop2eqiii}
 \parallel A_{m}^{p}x\parallel \leq N e^{-\delta m}e^{\nu n}\parallel
 A_{n}^{p}x\parallel,\;\;\text{for all}\;\;(m,n,p,x)\in T\times X.
 \end{equation}
\end{description}
\end{proposition}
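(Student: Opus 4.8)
The plan is to prove the proposition by establishing the cyclic chain of implications $(i)\Rightarrow(ii)\Rightarrow(iii)\Rightarrow(i)$. All three statements involve the same type of estimate $\|A_m^p x\|\le N\,w(m,n)\,\|A_n^p x\|$ and differ only in how the exponential weight $w(m,n)$ distributes a fixed exponent between the ``gap'' $m-n$ and the ``base point''. Hence each implication should follow from a purely algebraic rewriting of $w(m,n)$ using the identity $m=(m-n)+n$, together with careful bookkeeping of the admissible ranges of the constants.

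First, for $(i)\Rightarrow(ii)$, I would start from Definition~\ref{defes} and rewrite $e^{\beta n}=e^{\beta m}e^{-\beta(m-n)}$, so that $e^{-\alpha(m-n)}e^{\beta n}=e^{-(\alpha+\beta)(m-n)}e^{\beta m}$. Taking $\nu:=\alpha+\beta$ and keeping $\beta$ and $N$ unchanged gives (\ref{eq2prop2}); the required constraint $\beta\in[0,\nu)$ holds precisely because $\alpha>0$. Next, for $(ii)\Rightarrow(iii)$, I would split $e^{-\nu(m-n)}=e^{-\nu m}e^{\nu n}$ in (\ref{eq2prop2}) and absorb $e^{\beta m}$ to obtain the weight $e^{-(\nu-\beta)m}e^{\nu n}$; setting $\delta:=\nu-\beta$ yields (\ref{prop2eqiii}) with the same $N$ and the same $\nu$, where $\delta>0$ follows from $\beta<\nu$ and $\delta\le\nu$ from $\beta\ge0$.

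Finally, for $(iii)\Rightarrow(i)$, I would write $e^{-\delta m}=e^{-\delta(m-n)}e^{-\delta n}$ in (\ref{prop2eqiii}), so that $e^{-\delta m}e^{\nu n}=e^{-\delta(m-n)}e^{(\nu-\delta)n}$; taking $\alpha:=\delta>0$ and $\beta:=\nu-\delta\ge0$ (the latter since $\delta\le\nu$) recovers the defining inequality (\ref{es}). One could equivalently phrase the whole argument through Remark~\ref{remark4} by specialising to $p=n$, but the transitions are the same.

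Since the three properties differ only in how a single exponent is redistributed, I do not expect any genuine analytic obstacle here; the proof is essentially bookkeeping. The one point that actually requires attention — and the place where all the hypotheses $\alpha>0$, $\beta\in[0,\nu)$ and $0<\delta\le\nu$ get consumed — is verifying at each step that the newly defined constants still lie in the ranges prescribed by the statement, so I would state these verifications explicitly rather than leaving them implicit.
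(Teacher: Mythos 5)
Your proposal is correct and follows essentially the same route as the paper: the same substitutions $\nu=\alpha+\beta$, $\delta=\nu-\beta$, and $(\alpha,\beta)=(\delta,\nu-\delta)$ obtained by rewriting the exponents via $m=(m-n)+n$. The only difference is that you spell out the range verifications for the new constants, which the paper leaves largely implicit.
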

\begin{proof}
$(i)\Rightarrow (ii)$ It is a simple verification for $\beta \geq
0$ and $\nu =\alpha +\beta,$ with $\alpha$ and $\beta$ given by
Definition \ref{defes}.
\newline
$(ii)\Rightarrow (iii)$ We have that $\parallel
A_{m}^{p}x\parallel \leq N e^{\nu n}e^{- (\nu - \beta)m}\parallel
A_{n}^{p}x\parallel.$ Hence, for $\nu >0,$ and $\delta =\nu
-\beta$ we obtain relation (\ref{prop2eqiii}).
\newline
$(iii)\Rightarrow (i)$ Using (\ref{prop2eqiii}) we obtain that
$\parallel A_{m}^{p}x\parallel \leq N e^{-\delta (m-n)}e^{(\nu
-\delta)n}\parallel A_{n}^{p}x\parallel.$
\newline
For $\alpha = \delta$ and $\beta =\nu -\delta$ we obtain that
equation (\ref{A}) is e.s.
\end{proof}
\begin{remark}
It is obvious that  $u.e.s.\Rightarrow e.s.$ The following example
shows that the converse implication is not valid.
\end{remark}
\begin{example}\label{exemplues}
Let (\ref{A}) be the discrete-time system given by  (\ref{a_n}),
with $c>0.$ From (\ref{steluta}) it results that if $c\in \left(
0,\dfrac{1}{e}\right)$ then (\ref{A}) is e.s.

The converse implication is also true, because if we suppose that
(\ref{A}) is e.s. and $c\geq \dfrac{1}{e}$ then for $m=2q+1,$
$n=1$ with $q\in\mathbb{N}$ we obtain $N\geq (e^{\alpha
+1}c)^{2q}e^{-\beta}$ which is  false for $q\rightarrow\infty.$ In
conclusion (\ref{A}) is e.s. if and only if $c\in \left(
0,\dfrac{1}{e}\right).$
\end{example}
\begin{theorem}\label{teorema6}
The linear discrete-time system (\ref{A})  is exponentially stable
if and only if there exist some constants $D \geq 1,$ $d> 0$ and
$c \geq 0$ such that
\begin{equation}\label{t6eqes}
\sum\limits_{m=n}^{\infty} e^{d(m-n)}\parallel  A_{m}^{n}
x\parallel \leq D e^{c n} \parallel x\parallel,\;\;\text{for
all}\;\;(n,x)\in \mathbb{N}\times X.
\end{equation}
\end{theorem}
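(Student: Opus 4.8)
The plan is to follow the same two-step template used for Theorem~\ref{theorem1nes}, splitting into necessity and sufficiency; the key observation is that, unlike in the uniform case (Theorem~\ref{theorem1}, item~(iv)), here the exponential weight $e^{d(m-n)}$ already appears \emph{inside} the sum, so the converse implication reduces to a one-term estimate and no deep argument is needed.

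For necessity, I would start from the assumption that (\ref{A}) is exponentially stable and invoke Remark~\ref{remark4} to get constants $N\geq 1$, $\alpha>0$, $\beta\geq 0$ with $\parallel A_{m}^{n}x\parallel \leq N e^{-\alpha(m-n)}e^{\beta n}\parallel x\parallel$ for all $(m,n,x)\in\Delta\times X$. Fixing any $d\in(0,\alpha)$, I would then bound
\[
\sum_{m=n}^{\infty} e^{d(m-n)}\parallel A_{m}^{n}x\parallel \leq N e^{\beta n}\parallel x\parallel \sum_{m=n}^{\infty} e^{(d-\alpha)(m-n)} = \frac{N}{1-e^{d-\alpha}}\, e^{\beta n}\parallel x\parallel ,
\]
the geometric series converging because $d-\alpha<0$. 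Setting $D=\dfrac{N}{1-e^{d-\alpha}}$ (note $D\geq N\geq 1$) and $c=\beta\geq 0$ yields (\ref{t6eqes}).

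For sufficiency, I would assume (\ref{t6eqes}) and exploit the non-negativity of the summands: for a fixed $(m,n,x)\in\Delta\times X$ the single $m$-th term does not exceed the whole sum, so $e^{d(m-n)}\parallel A_{m}^{n}x\parallel \leq D e^{cn}\parallel x\parallel$, i.e. $\parallel A_{m}^{n}x\parallel \leq D e^{-d(m-n)}e^{cn}\parallel x\parallel$. This is exactly inequality (\ref{rem4ues}) with $N=D\geq 1$, $\alpha=d>0$, $\beta=c\geq 0$, and Remark~\ref{remark4} then gives that (\ref{A}) is exponentially stable.

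I do not expect a genuine obstacle here. The only points requiring a little care are the choice $d<\alpha$ (so the series converges) and the bookkeeping of constants so that they fall in the ranges demanded by Definition~\ref{defes} ($D\geq 1$, $d>0$, $c\geq 0$ on one side; $N\geq 1$, $\alpha>0$, $\beta\geq 0$ on the other). It is worth remarking why this is so much easier than the parallel statement in Theorem~\ref{theorem1}: there the summand carries no exponential weight, so a one-term estimate only produces a uniform bound $\parallel A_{m}^{n}x\parallel\leq D\parallel x\parallel$, and recovering exponential decay genuinely requires the argument of \cite{przyluski}; here the weight is built in from the outset.
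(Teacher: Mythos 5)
Your proposal is correct and follows essentially the same route as the paper: necessity via the geometric-series bound with $d\in(0,\alpha)$, $c=\beta$ and $D=\frac{N}{1-e^{d-\alpha}}=\frac{Ne^{\alpha}}{e^{\alpha}-e^{d}}$, and sufficiency by keeping only the single $m$-th term of the sum. Nothing further is needed.
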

\begin{proof}
{\it Necessity.} It is a simple verification  for $c=\beta ,$
$d\in (0,\alpha)$ and $D=1+
\dfrac{Ne^{\alpha}}{e^{\alpha}-e^{d}},$ with $N,$ $\alpha$ and
$\beta$ offered by Remark \ref{remark4}.
\newline
{\it Sufficiency.} The inequality (\ref{t6eqes}) implies
 $\parallel A_{m}^{n}x\parallel \leq D e^{cn}
 e^{-d(m-n)}\parallel x\parallel,$ for all
 $(m,n,x)\in\Delta\times X,$
 which shows that (\ref{A}) is
 e.s.
\end{proof}
\begin{theorem}\label{teorema8}
The linear discrete-time system (\ref{A})  is exponentially stable
if and only if there exist some constants $B \geq 1,$ $b>0$ and $c
\in [0,b)$ such that
\begin{equation}\label{t8eqtbes}
\sum\limits_{k=0}^{m} e^{b(m-k)}\parallel
(A_{m}^{k})^{*}x^{*}\parallel \leq B e^{c m}\parallel
x^{*}\parallel,\;\;\text{for all}\;\;(m,x^{*})\in \mathbb{N}\times
X^{*}.
\end{equation}
\end{theorem}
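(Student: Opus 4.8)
The approach is to prove the two implications separately, in each case reducing to the plain operator-norm characterization of exponential stability from Remark~\ref{remark4} and using repeatedly that $\parallel (A_{m}^{k})^{*}\parallel=\parallel A_{m}^{k}\parallel$ for every $(m,k)\in\Delta$.

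\emph{Necessity.} Suppose (\ref{A}) is e.s. First I would apply Proposition~\ref{prop2} to replace the defining estimate by one in which the decay rate strictly exceeds the growth exponent: there are constants $N\ge 1$, $\nu>0$ and $\beta\in[0,\nu)$ with $\parallel A_{m}^{n}x\parallel\le N e^{-\nu(m-n)}e^{\beta m}\parallel x\parallel$ for all $(m,n,x)\in\Delta\times X$ (take $p=n$ in Proposition~\ref{prop2}(ii)), hence $\parallel (A_{m}^{k})^{*}\parallel\le N e^{-\nu(m-k)}e^{\beta m}$. Next choose $c$ and $b$ with $\beta\le c<b<\nu$, which is possible precisely because $\beta<\nu$ (for instance $c=\beta$ and $b=(\beta+\nu)/2$). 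Then, using $e^{\beta m}\le e^{cm}$ and summing the geometric series,
\[
\sum_{k=0}^{m}e^{b(m-k)}\parallel (A_{m}^{k})^{*}x^{*}\parallel\le N e^{\beta m}\parallel x^{*}\parallel\sum_{k=0}^{m}e^{-(\nu-b)(m-k)}\le\frac{N e^{\nu-b}}{e^{\nu-b}-1}\,e^{cm}\parallel x^{*}\parallel ,
\]
which is (\ref{t8eqtbes}) with $B=\dfrac{N e^{\nu-b}}{e^{\nu-b}-1}\ge 1$.

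\emph{Sufficiency.} Assume (\ref{t8eqtbes}) holds. Given $(m,n)\in\Delta$, the index $k=n$ appears in the sum, so discarding all the other (nonnegative) terms yields $e^{b(m-n)}\parallel (A_{m}^{n})^{*}x^{*}\parallel\le B e^{cm}\parallel x^{*}\parallel$ for every $x^{*}\in X^{*}$, that is $\parallel A_{m}^{n}\parallel=\parallel (A_{m}^{n})^{*}\parallel\le B e^{-b(m-n)}e^{cm}$. Writing $e^{cm}=e^{cn}e^{c(m-n)}$ and putting $\alpha=b-c>0$ (here I use $c<b$) and $\beta=c\ge 0$, this gives $\parallel A_{m}^{n}x\parallel\le B e^{-\alpha(m-n)}e^{\beta n}\parallel x\parallel$ for all $(m,n,x)\in\Delta\times X$, so (\ref{A}) is exponentially stable by Remark~\ref{remark4}.

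The bulk of the argument is routine; the one point that needs care is the initial reduction in the necessity part. Definition~\ref{defes} imposes no relation between the decay rate $\alpha$ and the growth exponent $\beta$, so one cannot in general pick a weight $b$ with $\beta<b<\alpha$ — which is exactly what keeps the geometric series convergent while leaving room for $c<b$. Passing first to the form in Proposition~\ref{prop2}(ii), where $\beta<\nu$, removes this obstacle; one also uses throughout that a bounded linear operator and its adjoint have the same norm.
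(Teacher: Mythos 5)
Your proposal is correct and takes essentially the same route as the paper's (very terse) proof: for necessity the paper likewise takes $c=\beta$ and $b\in(\beta,\alpha+\beta)$ and sums the geometric series — your detour through Proposition~\ref{prop2}(ii) with $\nu=\alpha+\beta$ is the same substitution made explicit — and for sufficiency the paper also isolates the $k=n$ term, uses $\parallel (A_{m}^{n})^{*}\parallel=\parallel A_{m}^{n}\parallel$, and reads off $\alpha=b-c>0$, $\beta=c$. You have correctly identified and handled the one delicate point (needing $b$ strictly between the growth and decay exponents), so nothing further is required.
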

\begin{proof}
{\it Necessity.} It is a simple verification for  $c=\beta ,$
$b\in (\beta ,\alpha+\beta )$ and $B=1+\dfrac{Ne^{\alpha +\beta
}}{e^{\alpha + \beta }-e^{b}},$ with $N,$ $\alpha$ and $\beta$
given by Remark \ref{remark4}.
\newline
{\it Sufficiency.} Since $0\leq c <b,$ using inequality
(\ref{t8eqtbes}) for all $(m,n)\in\Delta$ we have that
$e^{b(m-n)}\parallel A_{m}^{n}\parallel =e^{b(m-n)}\parallel
(A_{m}^{n})^{*}\parallel \leq B e^{cm},$ which proves that
(\ref{A}) is e.s.
\end{proof}
%
%
\section{Strong exponential stability}
A particular concept of exponential stability is defined by:
\begin{definition}\label{defses}
The linear discrete-time system (\ref{A}) is said to be {\it
strongly exponentially stable} (and denoted as s.e.s) if there are
some
 constants $N\geq 1,$ $\alpha>0$ and $\beta \in [0,\alpha )$ such that
 \begin{equation}\label{es ses}
 \parallel A_{m}^{p}x\parallel \leq N e^{-\alpha(m-n)}e^{\beta n}\parallel
 A_{n}^{p}x\parallel,\;\;\text{for all}\;\;(m,n,p,x)\in T\times X.
 \end{equation}
\end{definition}
\begin{remark}\label{remark5}
The linear discrete-time system (\ref{A}) is strongly
exponentially stable  if and only if there are some
 constants $N\geq 1,$ $\alpha>0$ and $\beta \in [0,\alpha)$ such that
 \begin{equation}\label{ses}
 \parallel A_{m}^{n}x\parallel \leq N e^{-\alpha(m-n)}e^{\beta n}\parallel
 x\parallel,\;\;\text{for all}\;\;(m,n,x)\in \Delta\times X.
 \end{equation}
\end{remark}
\begin{proposition}\label{prop3}
The linear discrete-time system (\ref{A}) is strongly
exponentially stable if and only if there exist some constants $N
\geq 1,$ $\alpha >0$ and $\nu > 0$ with $\alpha \leq \nu <
2\alpha$ such that
\begin{equation}\label{eqprop3}
 \parallel A_{m}^{p}x\parallel \leq N e^{-\alpha m}e^{\nu n}\parallel
 A_{n}^{p}x\parallel,\;\;\text{for all}\;\;(m,n,p,x)\in T\times X.
 \end{equation}
\end{proposition}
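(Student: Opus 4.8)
The plan is to prove both implications from the same elementary observation: the weight $e^{-\alpha(m-n)}e^{\beta n}$ appearing in Remark \ref{remark5} equals $e^{-\alpha m}e^{(\alpha+\beta)n}$, so moving between the normalization of Definition \ref{defses} and that of (\ref{eqprop3}) is merely the reparametrization $\nu=\alpha+\beta$ (equivalently $\beta=\nu-\alpha$). The only thing requiring attention is that the half-open constraint $\beta\in[0,\alpha)$ must correspond exactly to $\alpha\leq\nu<2\alpha$.

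\textbf{Necessity ($\Rightarrow$).} Assume (\ref{A}) is s.e.s. By Definition \ref{defses} there are $N\geq 1$, $\alpha>0$ and $\beta\in[0,\alpha)$ with $\parallel A_{m}^{p}x\parallel\leq N e^{-\alpha(m-n)}e^{\beta n}\parallel A_{n}^{p}x\parallel$ for all $(m,n,p,x)\in T\times X$. Rewriting $e^{-\alpha(m-n)}e^{\beta n}=e^{-\alpha m}e^{(\alpha+\beta)n}$ and putting $\nu:=\alpha+\beta$, we obtain precisely (\ref{eqprop3}); moreover $0\leq\beta<\alpha$ gives $\alpha\leq\nu<2\alpha$.

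\textbf{Sufficiency ($\Leftarrow$).} Assume there are $N\geq 1$, $\alpha>0$ and $\nu>0$ with $\alpha\leq\nu<2\alpha$ satisfying (\ref{eqprop3}). Writing $e^{-\alpha m}e^{\nu n}=e^{-\alpha(m-n)}e^{(\nu-\alpha)n}$ and putting $\beta:=\nu-\alpha$, we get $\parallel A_{m}^{p}x\parallel\leq N e^{-\alpha(m-n)}e^{\beta n}\parallel A_{n}^{p}x\parallel$ for all $(m,n,p,x)\in T\times X$, and $\alpha\leq\nu<2\alpha$ yields $\beta\in[0,\alpha)$. By Definition \ref{defses}, (\ref{A}) is s.e.s.

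Since the argument is a single algebraic identity plus bookkeeping on parameter ranges, there is no real obstacle; the only point where one must be careful is matching the strict/nonstrict endpoints ($\beta\geq 0\Leftrightarrow\nu\geq\alpha$ and $\beta<\alpha\Leftrightarrow\nu<2\alpha$) so that the interval $[0,\alpha)$ and the interval $[\alpha,2\alpha)$ correspond faithfully.
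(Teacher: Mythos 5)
Your proof is correct and is essentially identical to the paper's own argument: both directions rest on the identity $e^{-\alpha(m-n)}e^{\beta n}=e^{-\alpha m}e^{(\alpha+\beta)n}$ together with the substitution $\nu=\alpha+\beta$ (respectively $\beta=\nu-\alpha$), with the interval $[0,\alpha)$ for $\beta$ translating exactly into $[\alpha,2\alpha)$ for $\nu$. Nothing further is needed.
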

\begin{proof}
{\it Necessity.} Using our hypothesis and Definition \ref{defses}
 we have that
$$\parallel A_{m}^{n}x\parallel \leq N e^{-\alpha (m-n)}e^{\beta
n}\parallel A_{n}^{p}\parallel =N e^{-\alpha m}e^{(\beta +\alpha)
n}\parallel A_{n}^{p}x\parallel$$ and for $\alpha >0$ and $\nu
=\beta +\alpha$ we obtain relation (\ref{eqprop3}).
\newline
{\it Sufficiency.} We have that $\parallel A_{m}^{p}x\parallel
\leq N e^{(\nu -\alpha)n}e^{-\alpha (m-n)}\parallel
A_{n}^{p}x\parallel.$ For $\alpha >0$ and $\beta =\nu -\alpha$ we
obtain that (\ref{A}) is s.e.s.
\end{proof}
\begin{remark}
It is obvious that $s.e.s. \Rightarrow e.s.$ The following example
shows that the concept of strong exponential stability is a
distinct concept of exponential stability.
\end{remark}
\begin{example}\label{exampleses}
Let (\ref{A}) be the linear discrete-time system given by
(\ref{a_n}).

If $c\in \left[ \dfrac{1}{e^{2}},\dfrac{1}{e}\right)$ then from
the considerations given in Example \ref{exemplues} it results
that (\ref{A}) is e.s. If we suppose that (\ref{A}) is s.e.s. then
there are $N \geq 1,$ $\alpha >0$ and $\beta\in [0,\alpha)$ such
that $\parallel A_{m}^{n}x\parallel \leq N e^{\beta n}e^{-\alpha
(m-n)}\parallel x\parallel,$ for all $(m,n,x)\in\Delta\times X.$
Then for $m=2q+1$ and $n=1$ we obtain $N \geq (e^{\alpha
+1}c)^{2q}e^{-\beta}$ which is impossible. Hence, for $c\geq
\dfrac{1}{e^{2}},$ (\ref{A}) is not s.e.s., which shows that if
(\ref{A}) is s.e.s. then $c\in\left(0,\dfrac{1}{e^{2}}\right).$
The converse implication is also valid. Indeed, for
$c\in\left(0,\dfrac{1}{e^{2}}\right),$ $\alpha=-\ln (ce),$
$\beta=1$ and $N=e$ we have that $\parallel A_{m}^{n}x\parallel
\leq N e^{\beta n}e^{-\alpha (m-n)}\parallel x\parallel,$ for all
$(m,n,x)\in\Delta\times X,$ with $\alpha >\beta \geq 1.$ Thus,
(\ref{A}) is s.e.s. In conclusion, (\ref{A}) is s.e.s. if and only
if $c\in\left(0,\dfrac{1}{e^{2}}\right).$
\end{example}

\begin{theorem}\label{teorema7}
The linear discrete-time system (\ref{A})  is strongly
exponentially stable if and only if there exist some constants $D
\geq 1,$ $d> 0$ and $c \geq 0$ with $0\leq c < d$ such that
\begin{equation}\label{t7etes}
\sum\limits_{m=n}^{\infty} e^{d(m-n)}\parallel  A_{m}^{n}
x\parallel \leq D e^{c n} \parallel x\parallel,\;\;\text{for
all}\;\;(n,x)\in \mathbb{N}\times X.
\end{equation}
\end{theorem}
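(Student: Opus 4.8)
The plan is to mimic the proof of Theorem \ref{teorema6}, the only new feature being that the constraint $0\le c<d$ on the series side must be matched with the constraint $\beta\in[0,\alpha)$ built into Definition \ref{defses}; once this correspondence is made explicit, both implications reduce to elementary exponent bookkeeping.

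\emph{Necessity.} I would start from the characterization of s.e.s. in Remark \ref{remark5}: there are $N\ge 1$, $\alpha>0$ and $\beta\in[0,\alpha)$ with $\parallel A_m^n x\parallel\le N e^{-\alpha(m-n)}e^{\beta n}\parallel x\parallel$ for all $(m,n,x)\in\Delta\times X$. Crucially $\beta<\alpha$, so I can fix a number $d$ with $\beta<d<\alpha$ and estimate, putting $j=m-n$,
\[
\sum_{m=n}^{\infty}e^{d(m-n)}\parallel A_m^n x\parallel\le N e^{\beta n}\parallel x\parallel\sum_{j=0}^{\infty}e^{-(\alpha-d)j}=\frac{Ne^{\alpha}}{e^{\alpha}-e^{d}}\,e^{\beta n}\parallel x\parallel,
\]
which is (\ref{t7etes}) with $c=\beta$, the chosen $d$, and $D=1+\dfrac{Ne^{\alpha}}{e^{\alpha}-e^{d}}\ge1$; note $0\le c<d$ by construction.

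\emph{Sufficiency.} Conversely, assuming (\ref{t7etes}) with $0\le c<d$, I would simply discard all but the $m$-th term of the nonnegative series to get $e^{d(m-n)}\parallel A_m^n x\parallel\le De^{cn}\parallel x\parallel$, i.e.
\[
\parallel A_m^n x\parallel\le N e^{-\alpha(m-n)}e^{\beta n}\parallel x\parallel,\qquad(m,n,x)\in\Delta\times X,
\]
with $N=D\ge1$, $\alpha=d>0$ and $\beta=c$. Since $c<d$ forces $\beta\in[0,\alpha)$, Remark \ref{remark5} gives that (\ref{A}) is s.e.s.

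I do not expect a real obstacle: the argument is a routine adaptation of Theorem \ref{teorema6}. The one point deserving attention is that the strict inequalities are genuinely used in both directions — in necessity to pick $d$ strictly between $\beta$ and $\alpha$ (legitimate precisely because strong exponential stability forces $\beta<\alpha$), and in sufficiency because the strict inequality $c<d$ is exactly what promotes ordinary exponential stability to the strong version via Remark \ref{remark5}.
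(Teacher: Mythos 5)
Your proof is correct and follows exactly the route the paper intends: its own proof of Theorem \ref{teorema7} simply defers to Definition \ref{defses} and the proof of Theorem \ref{teorema6}, and your argument is precisely that adaptation made explicit, with the right bookkeeping $\beta<d<\alpha$ in the necessity part and $\beta=c<d=\alpha$ in the sufficiency part.
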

\begin{proof}
It results from the Definition \ref{defses} and the proof of
Theorem \ref{teorema6}.
\end{proof}
\begin{theorem}\label{teorema9}
The linear discrete-time system (\ref{A})  is strongly
exponentially stable if and only if there exist some constants $B
\geq 1,$ $b>0$ and $c \geq 0$ with $0\leq 2c<b$ such that
\begin{equation}\label{eqtbes}
\sum\limits_{k=0}^{m} e^{b(m-k)}\parallel
(A_{m}^{k})^{*}x^{*}\parallel \leq B e^{c m}\parallel
x^{*}\parallel,\;\;\text{for all}\;\;(m,x^{*})\in \mathbb{N}\times
X^{*}.
\end{equation}
\end{theorem}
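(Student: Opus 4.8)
The plan is to obtain this by the same argument that proves Theorem~\ref{teorema8}, keeping careful track of the one extra parameter constraint that separates strong exponential stability from ordinary exponential stability: in the former one has $\beta\in[0,\alpha)$, while in the latter only $\beta\geq 0$ is required. This gap condition $\beta<\alpha$ is precisely what makes the interval of admissible rates for the dual estimate wide enough to force $2c<b$ rather than the weaker $c<b$.

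\emph{Necessity.} Assume (\ref{A}) is s.e.s. By Remark~\ref{remark5} there are $N\geq 1$, $\alpha>0$ and $\beta\in[0,\alpha)$ with $\parallel A_m^n x\parallel\leq Ne^{-\alpha(m-n)}e^{\beta n}\parallel x\parallel$ on $\Delta\times X$. Since $\parallel (A_m^k)^{*}\parallel=\parallel A_m^k\parallel$ for every $(m,k)\in\Delta$, this gives $\parallel (A_m^k)^{*}x^{*}\parallel\leq Ne^{-\alpha(m-k)}e^{\beta k}\parallel x^{*}\parallel$. I would then set $c=\beta$ and pick any $b$ in the open interval $(2\beta,\alpha+\beta)$, which is nonempty exactly because $\beta<\alpha$; this choice yields simultaneously $0\leq 2c<b$ and $b-\alpha-\beta<0$. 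Plugging the estimate into the left side of (\ref{eqtbes}), reindexing by $j=m-k$, and summing the geometric series $\sum_{j\geq 0}e^{(b-\alpha-\beta)j}$ gives
\[
\sum_{k=0}^{m}e^{b(m-k)}\parallel (A_m^k)^{*}x^{*}\parallel
\leq \frac{Ne^{\alpha+\beta}}{e^{\alpha+\beta}-e^{b}}\,e^{cm}\parallel x^{*}\parallel,
\]
so (\ref{eqtbes}) holds with $B=1+\dfrac{Ne^{\alpha+\beta}}{e^{\alpha+\beta}-e^{b}}\geq 1$. This is exactly the necessity computation of Theorem~\ref{teorema8}, the only new ingredient being the choice $b>2\beta$ in place of $b>\beta$.

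\emph{Sufficiency.} Suppose (\ref{eqtbes}) holds with $0\leq 2c<b$. Retaining only the term $k=n$ in the sum gives $e^{b(m-n)}\parallel (A_m^n)^{*}x^{*}\parallel\leq Be^{cm}\parallel x^{*}\parallel$ for all $(m,n)\in\Delta$ and $x^{*}\in X^{*}$; taking the supremum over $\parallel x^{*}\parallel\leq 1$ and using $\parallel (A_m^n)^{*}\parallel=\parallel A_m^n\parallel$ yields $\parallel A_m^n\parallel\leq Be^{-b(m-n)}e^{cm}=Be^{cn}e^{-(b-c)(m-n)}$. Hence $\parallel A_m^n x\parallel\leq Ne^{-\alpha(m-n)}e^{\beta n}\parallel x\parallel$ with $N=B\geq 1$, $\alpha=b-c>0$ and $\beta=c\geq 0$, and $\beta<\alpha$ holds precisely because $2c<b$. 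By Remark~\ref{remark5}, (\ref{A}) is s.e.s.

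The calculations are entirely routine; the only point requiring care is the bookkeeping of the parameter inequalities. The crux — and the reason the hypothesis reads $2c<b$ instead of the $c<b$ of Theorem~\ref{teorema8} — is the chain of equivalences ``$\beta<\alpha$'' $\iff$ ``$(2\beta,\alpha+\beta)$ is a nonempty interval for $b$'' $\iff$ ``the pair $(\alpha,\beta)=(b-c,c)$ with $0\leq 2c<b$ satisfies $\beta<\alpha$'', which is exactly the defining gap condition of strong exponential stability.
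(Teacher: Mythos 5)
Your proposal is correct and follows exactly the route the paper intends: Theorem~\ref{teorema9} is proved by rerunning the argument of Theorem~\ref{teorema8} with $c=\beta$, $b\in(2\beta,\alpha+\beta)$ (nonempty precisely because $\beta<\alpha$) for necessity, and by reading off $\alpha=b-c$, $\beta=c$ with $\beta<\alpha\iff 2c<b$ for sufficiency. You have merely written out the details that the paper leaves implicit, and the bookkeeping of the parameter inequalities is accurate.
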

\begin{proof}
It results from  Definition \ref{defses} and the proof of Theorem
\ref{teorema8}.
\end{proof}
%
%
\section{Conclusions}
The paper considers four concepts of exponential stability for
linear discrete-time systems in Banach spaces. We establish
relations between these concepts. It is obvious that we have
$u.e.s. \Rightarrow s.e.s.\Rightarrow e.s. \Rightarrow n.e.s.$ and
to emphasize the straightforwardness of the implication we have
considered the linear discrete-time system (\ref{A}) given by
$A(n)= c a_{n}I,$ where $c\geq 0$ and

\[
  a_{n} = \left\{
  \begin{array}{l l}
    e^{-n} & \quad \text{if $n=2k$}\\
    e^{n+1} & \quad \text{if $n=2k+1$}\\
  \end{array} \right.
\]
Examples \ref{exemplul1}-\ref{exampleses} show that:

$i)$ (\ref{A}) is u.e.s. if and only if $c=0;$

$ii)$ (\ref{A}) is n.e.s. if and only if
$c\in\left(0,\dfrac{1}{e}\right);$

$iii)$ (\ref{A}) is e.s. if and only if
$c\in\left(0,\dfrac{1}{e}\right);$

$iv)$ (\ref{A}) is s.e.s. if and only if
$c\in\left(0,\dfrac{1}{e^{2}}\right).$

The Theorems \ref{theorem1}, \ref{theorem1nes}, \ref{teorema6} and
\ref{teorema7} give characterizations for the exponential
stability concepts studied in this paper. They can be considered
as variants for the discrete-time case of a well-known result due
to Datko (\cite{datko}) in the continuous case.

The characterizations given by Theorems \ref{theorem2},
\ref{theorem2nes}, \ref{teorema8} and \ref{teorema9} can be
considered as variants for the discrete-time case of a result due
to E.A. Barbashin (\cite{barbasin}, Theorem 5.1, p. 169) in the
continuous case.
\section{Open problems}
Our goal here is to present an interesting open problem.A
characterization of Barbashin type for u.e.s. is given in
\cite{predamegan}. This result implies:
\begin{theorem}
For every linear discrete-time system (\ref{A}) the following
statements are equivalent:
\begin{description}

\item[{\it{i)}}] (\ref{A}) is uniformly exponentially stable;

\item[{\it{ii)}}] there are some constants $B \geq 1$ and $b >0$
such that
\begin{equation}
\sum\limits_{k=0}^{m} e^{b(m-k)}\parallel A_{m}^{k}\parallel \leq
B,\;\;\text{for all}\;\;m\in \mathbb{N};
\end{equation}

\item[{\it{iii)}}] there exists a constant $B \geq 1$ such that
\begin{equation}\label{eqd2345}
\sum\limits_{k=0}^{m}\parallel A_{m}^{k}\parallel \leq
B,\;\;\text{for all}\;\;m\in \mathbb{N}.
\end{equation}
\end{description}
\end{theorem}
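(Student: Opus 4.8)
The plan is to prove the chain of implications $(i)\Rightarrow(ii)\Rightarrow(iii)\Rightarrow(i)$, closely mirroring the structure of Theorem \ref{theorem2}, since the present statement is simply the ``operator-norm'' form of that earlier result. The first step, $(i)\Rightarrow(ii)$, is to invoke Remark \ref{remues}: if (\ref{A}) is u.e.s. then $\parallel A_{m}^{k}\parallel \leq N e^{-\alpha(m-k)}$ for all $(m,k)\in\Delta$ (using submultiplicativity of the operator norm together with $\parallel A_{m}^{k}x\parallel\le Ne^{-\alpha(m-k)}\parallel x\parallel$). Then for any $b\in(0,\alpha)$,
\[
\sum_{k=0}^{m} e^{b(m-k)}\parallel A_{m}^{k}\parallel \leq N\sum_{k=0}^{m} e^{-(\alpha-b)(m-k)} \leq \frac{N e^{\alpha-b}}{e^{\alpha-b}-1}=:B,
\]
which is a finite constant independent of $m$. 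The step $(ii)\Rightarrow(iii)$ is immediate, since $e^{b(m-k)}\geq 1$ for $k\leq m$, so the sum in $(iii)$ is dominated by the sum in $(ii)$ with the same constant $B$.

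The only step requiring real work is $(iii)\Rightarrow(i)$, and here I would imitate the argument in the proof of Theorem \ref{theorem2} for the corresponding implication, but now working directly with operator norms rather than with adjoints applied to functionals. Fix $(m,n)\in\Delta$. From (\ref{eqd2345}), for every $k\in\{n,n+1,\dots,m\}$ and every $y\in X$ we have $\parallel A_{k}^{n}y\parallel\le \parallel A_{k}^{n}\parallel\parallel y\parallel\le B\parallel y\parallel$; choosing $y=A_{m}^{k}x$ and using the cocycle identity $A_{k}^{n}A_{m}^{k}=A_{m}^{n}$ gives $\parallel A_{m}^{n}x\parallel\le B\parallel A_{m}^{k}x\parallel$, hence $\parallel A_{m}^{n}\parallel\le B\parallel A_{m}^{k}\parallel$ for each such $k$. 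Summing over $k=n,\dots,m$ yields
\[
(m-n+1)\parallel A_{m}^{n}\parallel \le B\sum_{k=n}^{m}\parallel A_{m}^{k}\parallel \le B\sum_{k=0}^{m}\parallel A_{m}^{k}\parallel \le B^{2}.
\]
Thus $\parallel A_{m}^{n}\parallel\le B^{2}/(m-n+1)$, so the operator norm of $A_{m}^{n}$ decays to $0$ in $m-n$; concretely, $\parallel A_{m}^{n}\parallel\le B^{2} a_{m-n}$ with $a_{j}=1/(j+1)\to 0$. Applying Proposition \ref{prop1}, part $(iv)\Rightarrow(i)$ (with the constant $N=\max\{1,B^{2}\}$ and the sequence $a_{j}$), we conclude that (\ref{A}) is u.e.s.

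The main obstacle is essentially bookkeeping: one must be careful that the estimate $\parallel A_{m}^{n}\parallel\le B\parallel A_{m}^{k}\parallel$ is valid for the full range $k=n,\dots,m$ (the boundary cases $k=n$, where $A_{k}^{n}=I$ and the inequality is trivial, and $k=m$, where $A_{m}^{k}=I$, both need $B\ge 1$), and that the resulting decay rate $1/(m-n+1)$, while not itself exponential, is enough to trigger Proposition \ref{prop1}(iv). Once that reduction is in place, no further work is needed. Alternatively, one could derive a genuinely exponential estimate directly by the same block-iteration device used in the proof of Proposition \ref{prop1}$(iv)\Rightarrow(i)$, but routing through Proposition \ref{prop1} keeps the argument short and self-contained.
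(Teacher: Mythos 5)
Your overall strategy is sound and, importantly, it is not the paper's route: the paper gives no proof of this theorem at all, deferring entirely to the Preda--Megan reference \cite{predamegan}, so a self-contained argument of the kind you give (a Datko-type summation for $(i)\Rightarrow(ii)\Rightarrow(iii)$, then an averaging argument plus Proposition \ref{prop1}, $(iv)\Rightarrow(i)$, for the converse) is genuinely worth having. The forward implications are fine, and the final reduction is correct: from $(m-n+1)\parallel A_m^n\parallel\le B^2$ you get $\parallel A_m^n x\parallel\le B^2 a_{m-n}\parallel x\parallel$ with $a_j=1/(j+1)\to 0$, and Proposition \ref{prop1} then yields u.e.s.

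There is, however, one wrong step in your justification of the key inequality $\parallel A_m^n\parallel\le B\parallel A_m^k\parallel$. You write the cocycle identity as $A_k^n A_m^k=A_m^n$; with the paper's convention (\ref{eqAmn}) the correct identity for $n\le k\le m$ is $A_m^n=A_m^k A_k^n$, so ``choosing $y=A_m^k x$'' does not produce $A_m^n x$, and the pointwise inequality $\parallel A_m^n x\parallel\le B\parallel A_m^k x\parallel$ you assert does not follow from $\parallel A_k^n y\parallel\le B\parallel y\parallel$. This is not a cosmetic point: if such a pointwise estimate could be extracted from a pointwise hypothesis, it would essentially settle the paper's Conjecture, which is exactly the vector-valued version of $(iii)\Rightarrow(i)$ and is stated as open; the adjoint trick in Theorem \ref{theorem2} works precisely because taking adjoints reverses the order of composition. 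Fortunately your argument only uses the operator-norm inequality, which is immediate from submultiplicativity: $\parallel A_m^n\parallel=\parallel A_m^k A_k^n\parallel\le\parallel A_m^k\parallel\,\parallel A_k^n\parallel\le B\parallel A_m^k\parallel$, since hypothesis $(iii)$ applied at time $k$ gives $\parallel A_k^n\parallel\le B$. With that one line replaced, the proof is complete.
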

Similar results hold for other concepts of exponential stability.
We present as an open problem the proof of the implication
$(iii)\Rightarrow (i)$ for the following:
\begin{conjecture}
For every linear discrete-time system (\ref{A}) the following
statements are equivalent:
\begin{description}

\item[{\it{i)}}] (\ref{A}) is uniformly exponentially stable;

\item[{\it{ii)}}] there are some constants $B \geq 1$ and $b >0$
such that
\begin{equation}
\sum\limits_{k=0}^{m} e^{b(m-k)}\parallel A_{m}^{k}x\parallel \leq
B\parallel x\parallel,\;\;\text{for all}\;\;(m,x)\in
\mathbb{N}\times X;
\end{equation}

\item[{\it{iii)}}] there exists a constant $B \geq 1$ such that
\begin{equation}\label{eqd234}
\sum\limits_{k=0}^{m}\parallel A_{m}^{k}x\parallel \leq B\parallel
x\parallel,\;\;\text{for all}\;\;(m,x)\in \mathbb{N}\times X.
\end{equation}
\end{description}
\end{conjecture}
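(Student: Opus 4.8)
The implications $(i)\Rightarrow(ii)\Rightarrow(iii)$ are immediate, exactly as in Theorem~\ref{theorem1}, so the content is the converse $(iii)\Rightarrow(i)$. My plan is to try to reproduce the Barbashin-type argument used for $(iii)\Rightarrow(i)$ in Theorem~\ref{theorem2}. Retaining only the term $k=n$ in (\ref{eqd234}) already gives $\parallel A_{m}^{n}x\parallel\le B\parallel x\parallel$ for all $(m,n,x)\in\Delta\times X$, hence $\parallel A_{m}^{n}\parallel\le B$ for every $(m,n)\in\Delta$: the cocycle is uniformly bounded.

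Next, for $n\le k\le m$ and any $x$,
\[
\parallel A_{m}^{n}x\parallel=\parallel A_{m}^{k}A_{k}^{n}x\parallel\le\parallel A_{m}^{k}\parallel\,\parallel A_{k}^{n}x\parallel\le B\parallel A_{k}^{n}x\parallel ,
\]
so summing over $k\in\{n,\dots,m\}$ gives $(m-n+1)\parallel A_{m}^{n}x\parallel\le B\sum_{k=n}^{m}\parallel A_{k}^{n}x\parallel$. If the partial sums $\sum_{k=n}^{m}\parallel A_{k}^{n}x\parallel$ were bounded by $C\parallel x\parallel$ uniformly in $m$, we would get $\parallel A_{m}^{n}x\parallel\le\dfrac{BC}{m-n+1}\parallel x\parallel$, and (\ref{A}) would be u.e.s. by Proposition~\ref{prop1}, $(iv)\Rightarrow(i)$, with $a_{j}=BC/(j+1)\to0$. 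The catch is that hypothesis (\ref{eqd234}) controls $\sum_{k=0}^{m}\parallel A_{m}^{k}x\parallel$, i.e.\ the backward products from the \emph{fixed terminal time} $m$, whereas the argument needs $\sum_{k=n}^{m}\parallel A_{k}^{n}x\parallel$, the forward orbit from the \emph{fixed initial time} $n$. For functionals these two sums are interchanged by passing to adjoints, which is precisely what makes the proof of Theorem~\ref{theorem2} work; for vectors there is no such symmetry.

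Hence the whole difficulty concentrates in bridging one of two gaps, and this is the step I expect to be genuinely hard. Either (a) deduce from (\ref{eqd234}) the fixed-initial-time estimate $\sum_{m=n}^{\infty}\parallel A_{m}^{n}x\parallel\le D\parallel x\parallel$ --- which by Theorem~\ref{theorem1} already yields u.e.s.; or (b) deduce from (\ref{eqd234}) the operator-norm bound $\sum_{k=0}^{m}\parallel A_{m}^{k}\parallel\le B'$ --- which by the operator-norm theorem stated just before the conjecture already yields u.e.s. Neither is formal: for (b), $\sum_{k}\parallel A_{m}^{k}\parallel=\sum_{k}\sup_{\parallel x\parallel\le1}\parallel A_{m}^{k}x\parallel\ge\sup_{\parallel x\parallel\le1}\sum_{k}\parallel A_{m}^{k}x\parallel$, so the hypothesis controls the wrong side, and no uniform boundedness principle closes the gap because each $A_{m}^{k}$ is already bounded by $B$ individually; for (a), the two $\ell^{1}$-conditions genuinely concern different slices of the family $\{A_{m}^{k}\}$.

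The routes I would try: exploit the rigidity that $\{A_{m}^{k}:0\le k\le m\}$ are the consecutive left-tails of a single operator product $A(m)A(m-1)\cdots A(1)$ --- a great deal of internal structure that a mere family of uniformly bounded operators does not have --- in order to manufacture, for each $m$, one vector $x_{m}$ for which $\sum_{k=0}^{m}\parallel A_{m}^{k}x_{m}\parallel$ already controls $\sum_{k=0}^{m}\parallel A_{m}^{k}\parallel$ up to a fixed factor; alternatively, a telescoping or discrete-Gronwall estimate on $k\mapsto\parallel A_{m}^{k}x\parallel$, or a Baire-category argument on the sequence space $\ell^{1}(\mathbb{N};X)$. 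In every case, once one has produced a bound $\parallel A_{m}^{n}\parallel\le\varphi(m-n)$ with $\varphi(j)\to0$, Proposition~\ref{prop1} finishes the proof; producing such a $\varphi$ from the pointwise sum alone is the crux.
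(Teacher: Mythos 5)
Your proposal does not contain a proof of the statement, but that is exactly the situation in the paper as well: the statement is labelled a \emph{Conjecture}, the authors prove only the chain $(i)\Rightarrow(ii)\Rightarrow(iii)$ (which they, like you, call immediate), and they explicitly pose the converse $(iii)\Rightarrow(i)$ as an open problem. So there is nothing in the paper for your argument to be measured against beyond the easy direction, which you handle correctly.

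Your diagnosis of where the difficulty lies is accurate and matches the structure of the paper. The Barbashin-type proof of Theorem~\ref{theorem2} works because, on the dual side, the identity $(A_{m}^{n})^{*}=(A_{k}^{n})^{*}(A_{m}^{k})^{*}$ lets one bound $\parallel (A_{m}^{n})^{*}x^{*}\parallel$ by $B\parallel (A_{m}^{k})^{*}x^{*}\parallel$ and then sum over $k$ against the \emph{same} hypothesis $\sum_{k=0}^{m}\parallel (A_{m}^{k})^{*}x^{*}\parallel\leq B\parallel x^{*}\parallel$; you correctly observe that for vectors the analogous factorization produces $\parallel A_{m}^{n}x\parallel\leq B\parallel A_{k}^{n}x\parallel$, whose sum over $k$ is the forward-orbit quantity $\sum_{k=n}^{m}\parallel A_{k}^{n}x\parallel$ controlled by Theorem~\ref{theorem1}~(iv), not the fixed-terminal-time quantity $\sum_{k=0}^{m}\parallel A_{m}^{k}x\parallel$ appearing in (\ref{eqd234}). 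That mismatch is precisely why the authors can prove the operator-norm version (the theorem preceding the conjecture, where $\parallel A_{m}^{k}\parallel=\parallel (A_{m}^{k})^{*}\parallel$ restores the symmetry) but not the pointwise version. Your candidate routes (a) and (b), and your explanation of why neither is formal, are sound; none of them is carried out, so the implication $(iii)\Rightarrow(i)$ remains unproved here exactly as it does in the paper.
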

The implications $(i)\Rightarrow(ii)\Rightarrow(iii)$ are
immediate. The problem is the proof of the implication
$(iii)\Rightarrow(i)$. Of course, we are interested in obtaining
similar results for n.e.s., e.s. and s.e.s according to the
previous condition.

\end{document}